\newcommand{\C}{\mathbb{C} }
\newcommand{\R}{\mathbb{R} }
\newcommand{\Z}{\mathbb{Z} }
\newcommand{\N}{\mathbb{N} }
\newcommand{\Lpnorm}[2]{\left\| #2 \right\|_{L^{#1}}}
\newcommand{\convolvedwith}{*}
\newcommand{\indicator}[1]{{\bf 1}_{#1}}
\newcommand{\sizeof}[1]{\left \lvert #1 \right \rvert}
\newcommand{\absolutevalueof}[1]{\left \lvert #1 \right \rvert}
\newcommand{\setof}[1]{\left\{ #1 \right\}}
\newcommand{\dimension}{d}
\renewcommand{\time}{m}
\newcommand{\largetime}{M}
\newcommand{\maximalfunction}{M}
\newcommand{\variation}{V_r}
\newcommand{\jump}{\lambda}
\newcommand{\jumpfunction}{N}
\DeclareMathOperator{\supp}{supp}
\newtheorem{prop}{Proposition}[section]
\newtheorem{lemma}{Lemma}[section]
\newtheorem{theorem}{Theorem}
\newtheorem*{Moon}{Moon's theorem}
\newtheorem*{CdG}{Carrillo--de Guzm\'an's theorem}
\theoremstyle{definition}
\theoremstyle{remark}
\title[Pointillist principle for variations]{The Pointillist Principle for Variation Operators and Jump functions}
\author[K Hughes]{Kevin Hughes}
\address{
School of Computing, Engineering \& the Built Environment, 
Edinburgh Napier University, 
Edinburgh EH10 5DT, UK
}
\email{khughes.math@gmail.com}
\subjclass[2020]{Primary 42B25}
\begin{document}

\maketitle
%
%
%

\begin{abstract}
I extend the pointillist principles of Moon and Carrillo--de Guzm\'an to variational operators and jump functions. 
\end{abstract}

\section{The pointillist principle}

In \cite{Moon}, Moon observed that, for a sequence of sufficiently smooth convolution operators and any $q \geq 1$, the weak \((1,q)\) boundedness of their maximal operator is equivalent to restricted weak \((1,q)\) boundedness of the maximal operator. In this paper, the goal is to extend this theorem to variational operators and to jump functions. 
I now recall a couple definitions in order to make this precise. 

For a sequence of operators $(T_\time)_{\time \in \N}$, define their maximal function 
\[
\maximalfunction(T_\time f(x) : \time \in \N) := \sup_{\time \in \N} |T_\time f(x)|
\]
for $f:\R^\dimension \to \C$ and $x \in \R^\dimension$. 
Suppose that \(p,q \geq 1\). An operator \(T\) is \emph{weak-type \((p,q)\) with norm \(C\)} if it satisfies the inequality
\begin{equation}\label{def:weak-type}
\| Tf \|_{L^{q,\infty}} \leq C \|f\|_{L^p}
\quad \text{for all} \quad f \in L^p
\end{equation}
where \( \|f\|_{L^p} := \big( \int |f(x)|^p dx \big)^{1/p} \) and \( \| g \|_{L^{q,\infty}} := \sup_{t>0} t|\{x \in \R^\dimension : |g(x)| \geq t\}|^{1/q} \) for functions \(f,g : \R^d \to \C\) with the usual modifications made when \(p\) or \(q\) is infinite. 
Here and throughout, \(C\) is non-negative. 
In this paper, we will restrict our functions to be defined on \(\R^\dimension\) and will work with the Lebesgue measure thereon. So, I will rarely include this in the notation, and I will also let \(|X|\) denote the measure of a finite (Lebesgue) measurable set \(X\) in \(\R^\dimension\). 
Additionally, an operator \(T\) is said to be \emph{restricted weak-type \((p,q)\) with norm \(C\)} if \eqref{def:weak-type} holds for each function \(f\) which is the characteristic function of a finite measurable set. 

\begin{Moon}
Suppose that $(T_\time)_{\time \in \N}$ is a sequence of convolution operators given by $T_\time f := f \convolvedwith g_\time$ with $g_\time \in L^1(\R^\dimension)$ for each \(\time\in\N\). 
For any $q \geq 1$, $\maximalfunction(T_\time f(x) : \time \in \N)$ is restricted weak-type $(1,q)$ with norm $C$ if and only if $\maximalfunction(T_\time f(x) : \time \in \N)$ is weak-type $(1,q)$ with norm $C$. 
\end{Moon}

The essential difference between the two distinct weak-types lies in the class of input functions used to define them. The class of all \(L^p\) functions serve as input to the (unqualified) weak-type inequalities while its subclass of characteristic functions of finite measurable sets serve as input to the restricted weak-type inequalities. In particular, if an operator is weak-type \((p,q)\) then it is automatically restricted weak-type \((p,q)\). 
Moon's theorem says that the converse is true for certain maximal functions when \(p=1\). The converse may fail for linear operators when \(p>1\); see page~149 in \cite{Moon}. 

In \cite{CdG}, Carillo--de Guzm\'an gave a version of Moon's theorem where the class of characteristic functions is replaced by linear combinations of delta functions. 
To state their result, we introduce more terminology. 
Let $\delta_x$ denote the (Dirac) delta function at the point $x \in \R^\dimension$. 
In analogy with restricted weak-type, let us say that an operator \(T\) is \emph{pointed weak-type $(p,p)$ with norm at most $C$} if for any finite subset of points $X \subset \R^\dimension$, we have the inequality 
\begin{equation}\label{eq:restricted_pointed_weak_type_inequality}
\Big| \big\{x \in \R^\dimension : \Big|{T \big(\sum_{y \in X} \delta_y \big)(x)} \Big| > \lambda \big\} \Big|
\leq 
C\#{X} / \lambda^{p} 
\quad \text{for all } \lambda>0
.\end{equation}
This inequality and definition is to be interpreted as defined only when the operator \(T\) makes sense on delta functions. For instance, this makes sense when \(T\) is taken to be the maximal function \( \maximalfunction(f*g_\time(x) : \time \in \N) \) formed from a sequence of \(L^1\) functions \((g_\time)_{\time \in \N}\) in which case \( \sum_{y \in X} \delta_y * g_\time(x) = \sum_{y \in X} g(x-y) \). 
\begin{CdG}
Suppose that \((T_\time)_{\time \in \N}\) is a sequence of convolution operators given by $T_\time f := f \convolvedwith g_\time$ with $g_\time \in L^1(\R^\dimension)$ for each \(\time\in\N\). 
For any $p \geq 1$, $\maximalfunction(T_\time f(x) : \time \in \N)$ is weak-type $(p,p)$ with norm at most C if $\maximalfunction(T_\time f(x) : \time \in \N)$ is pointed weak-type $(p,p)$ with norm at most C. 
Furthermore, the converse is true if $p=1$. 
\end{CdG}

Pointed weak-type inequalities form a third distinct class of inequalities because finite sums of delta functions serve as input to the pointed weak-type inequalities; note that delta functions are not \(L^p\) functions for any \(p\) and give a distinct class of input functions. 
The converse to Carrillo--de Guzm\'an's theorem can fail for $p>1$; see page~121 in \cite{CdG}.

Grafakos--Mastylo extended Moon's theorem to the multilinear setting in \cite{GM} while Carena extended Carrillo--de Guzm\'an's theorem to more general metric measure spaces in \cite{Carena}. 
See \cite{K} and \cite{MS} for more extensions. 
It is this collection of theorems we refer to as the `pointillist principle', taking its name from the Pointillism movement in art. 

The purpose of this short note is to extend Moon and Carrillo--de Guzm\'an's instances of the pointillist principle to variational operators and jump functions. 
The pointillist principle led to a new proof of boundedness of the Hardy--Littlewood maximal function in \cite{Carlsson} and the best constant for the Hardy--Littlewood maximal function in one dimension in \cite{Melas}, and it is my hope that this work will be used to give new proofs of the \(L^p\) boundedness of the variation of Hardy--Littlewood averages. 
I now recall these operators and discuss a few of their basic properties. 

Let $r \in [1,\infty)$ and $\mathcal{R} \subseteq \N$. Suppose that $(f_\time)_{\time \in \N}$ is a sequence of Lebesgue measurable functions. Define pointwise the $r$-variation of the subsequence $(f_\time)_{\time \in \mathcal{R}}$ 
\begin{equation}
\label{def:variation}
\variation(f_\time(x) : \time \in \mathcal{R})
:= 
\sup \Big( {\sum_{i=1}^L |f_{\time_i}(x) - f_{\time_{i+1}}(x)|^r} \Big)^{1/r} 
,\end{equation}
where the supremum is over all finite, increasing subsequences $(\time_i)$ in $\mathcal{R}$. 
One makes the usual modification using the essential supremum to extend \eqref{def:variation} to $r=\infty$. 
Note that $\variation(\cdot)$ is sublinear in its argument. 
For \(\lambda>0\), define the jump function $\jumpfunction_\lambda(f_\time(x) : \time \in \mathcal{R})$ as given by the supremum over $M \in \N$ such that 
there exists a sequence \( s_0 < t_0 \leq s_1 < t_1 \leq \cdots  \leq s_M < t_M \) in $\mathcal{R}$ with 
\( |f_{s_i}(x) - f_{t_i}(x)| > \lambda \) 
for all $0 \leq i \leq M$. 
Unlike the variation operators, the jump functions fail to be sublinear. 
However, we note the almost sub-additivity of the jump functions: 
\begin{equation}\label{equation:jumpfunction_sublinearity}
\jumpfunction_\jump([f_\time+g_\time](x) : \time \in \mathcal{R}) 
\leq 
\jumpfunction_{\lambda_1}(f_\time(x) : \time \in \mathcal{R}) + \jumpfunction_{\lambda_2}(g_\time(x) : \time \in \mathcal{R})  
\end{equation}
for \(\lambda_1\) and \(\lambda_2\) positive with \(\lambda_1+\lambda_2=\lambda\).

For present purposes, we are most interested in these objects when the functions \(f_\time := T_\time{f}\) for a sequence of operators \((T_\time)_{\time\in\N}\) e.g., naturally occurring families of linear operators in probability and analysis such as expectation operators from a martingale or Hardy--Littlewood averages. The main problem becomes establishing the \(L^p\) boundedness of the associated variation operators and jump functions. 

The variation operators are connected to the jump functions by the inequality: 
\begin{equation*}
\jumpfunction_\lambda(T_\time f(x) : \time \in \mathcal{R}) 
\leq 
4 \lambda^{-r} [\variation(T_\time f(x) : \time \in \mathcal{R})]^r
\end{equation*}
for each $r \geq 1$. 
Surprisingly this can be reversed on average in $L^p(\R^\dimension)$ for $1 \leq p < \infty$ when $r>2$. 
In practice the $L^p$ boundedness of $V_2$ often fails. 
However the jump function \( \lambda \sqrt{N_\lambda} \) may still be bounded in which case it acts as a surrogate `endpoint' operator for $V_2$; see \cite{JSW}. 
The variation operators are related to the maximal functions by 
\begin{align*}
V_\infty(T_\time f(x) : \time \in \mathcal{R})
&= 
2\maximalfunction(T_\time f(x) : \time \in \mathcal{R})
\\&\leq 
2\left[ V_\infty(T_\time f(x) : \time \in \mathcal{R}) + T_{\time_0} f(x) \right]
\end{align*}
for any $\time_0 \in \mathcal{R}$. 
Because of this inequality, we may henceforth assume that \(r\) is finite. 
On the one hand, $\variation f(x)$ increases as $r$ decreases so that its $L^p$-boundedness becomes more difficult to prove. 
On the other hand, the jump inequalities and variational estimates give quantitative versions of pointwise ergodic theorems. 
For a more thorough discussion of variations and jump functions, see \cite{Bourgain_pointwise, PX, JSW, MST}. 

Our first theorem generalizes Moon's theorem to variations and jump functions. 
\begin{theorem}
\label{theorem:Moon_variations}
Suppose that $(T_\time)_{\time \in \N}$ is a sequence of convolution operators given by $T_\time f := f \convolvedwith g_\time$ with $g_\time \in L^1(\R^\dimension)$ for each \(\time\in\N\). 
For any $q,r \geq 1$, $\variation(T_\time f : \time \in \N)$ is restricted weak-type $(1,q)$ with norm $C$ if and only if $\variation(T_\time f : \time \in \N)$ is weak-type $(1,q)$ with norm $C$. 
Moreover, $\lambda \sqrt[r]{\jumpfunction_\jump}$is restricted weak-type $(1,q)$ if and only if $\lambda \sqrt[r]{\jumpfunction_\jump}$is weak-type $(1,q)$. 
\end{theorem}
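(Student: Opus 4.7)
The ``only if'' direction is immediate by specializing to $f = \mathbf{1}_E$, so my plan is to prove the ``if'' direction by adapting Moon's original argument from \cite{Moon}. The key structural fact for the $r$-variation is that, for each $x$, the map $f \mapsto \variation(T_\time f(x) : \time \in \mathbb{N})$ is sublinear and positive-homogeneous in $f$, because each $T_\time$ is linear and the $r$-variation is a seminorm on sequences. By density of simple functions in $L^1(\mathbb{R}^d)$ and continuity of each convolution operator $T_\time$ on $L^1$, it suffices to prove the weak-type bound for $f \geq 0$ a simple function.

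For such $f$, I would apply the layer-cake identity $f = \int_0^\infty \mathbf{1}_{\{f > t\}} \, dt$ together with Minkowski's integral inequality in the seminormed sequence space of finite $r$-variation to obtain the pointwise comparison
\begin{equation*}
\variation(T_\time f(x) : \time \in \mathbb{N}) \leq \int_0^\infty \variation(T_\time \mathbf{1}_{\{f > t\}}(x) : \time \in \mathbb{N}) \, dt .
\end{equation*}
Feeding in the restricted weak-type hypothesis, which bounds each $\variation(T_\time \mathbf{1}_{\{f > t\}}(x) : \time \in \mathbb{N})$ in $L^{q,\infty}$ by $C |\{f > t\}|$, and integrating in $t$ via $\int_0^\infty |\{f > t\}| \, dt = \|f\|_1$, I expect to recover the weak-type $(1, q)$ estimate for $\variation(T_\time f(x) : \time \in \mathbb{N})$.

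For the jump function $\jump \sqrt[r]{\jumpfunction_\jump}$, sublinearity in $f$ is replaced by the almost-sublinearity \eqref{equation:jumpfunction_sublinearity}, which plays the same role at the cost of dyadically halving the threshold $\jump$ at each splitting step. Since only qualitative equivalence (not preservation of constants) is asserted for the jump function, these constant losses are harmless and the layer-cake scheme can be iterated with $\jump$ decaying geometrically.

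The hard part will be executing the final distributional combination so as to preserve the numerical constant $C$ from the restricted weak-type hypothesis to the weak-type conclusion, as claimed for $\variation$: a naive Minkowski-type inequality in $L^{q,\infty}$ would introduce a $q$-dependent factor, and at the endpoint $q = 1$ the space $L^{1,\infty}$ is not even normable. To avoid these losses, I would argue directly on the distribution function, exploiting the monotone structure of the decreasing family $\{f > t\}_{t > 0}$; this is the combinatorial heart of Moon's original argument.
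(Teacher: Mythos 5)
Your layer-cake strategy is not Moon's argument, and as written it cannot close. The pointwise bound
\begin{equation*}
\variation(T_\time f(x) : \time \in \mathbb{N}) \leq \int_0^\infty \variation(T_\time \indicator{\{f > t\}}(x) : \time \in \mathbb{N}) \, dt
\end{equation*}
is correct by sublinearity, but to pass from this to a weak-type $(1,q)$ bound you would need to integrate $L^{q,\infty}$ estimates in $t$. That requires normability of $L^{q,\infty}$ (false at $q = 1$) and introduces a $q$-dependent factor otherwise, so it cannot produce the claimed constant-preserving equivalence. You correctly diagnose this obstruction yourself, but the hoped-for fix — exploiting monotonicity of the level sets $\{f > t\}$ and arguing ``directly on the distribution function'' — is not what Moon does, and no such combinatorial patch is available: for a general sublinear operator, restricted weak-type $(1,q)$ does \emph{not} imply weak-type $(1,q)$ with the same constant. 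The theorem is special to convolution with $L^1$ kernels, and the layer-cake decomposition never uses that structure.

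The actual mechanism, which is Proposition~\ref{proposition:Moon_approximation}, is different in kind. One first mollifies each $g_\time$ to a $C^1$ function $h_\time$ with $\|g_\time - h_\time\|_1 < \epsilon$, and then observes that for a normalized simple function $f = \sum_k a_k \indicator{F_k}$ with $\|f\|_\infty = 1$ built from small balls, the Mean Value Theorem gives $f * h_\time(x) = \sum_k a_k |F_k| \, h_\time(x - y_k)$ for sample points $y_k \in F_k$. This expression depends only on the masses $a_k|F_k|$, not on the shape of $f$, so you may replace $a_k\indicator{F_k}$ by a sub-ball $I_k \subset F_k$ with $|I_k| = a_k|F_k|$; then $\indicator{I} := \indicator{\cup_k I_k}$ is a \emph{single characteristic function} with $|I| = \|f\|_1$, and $|f * h_\time(x) - \indicator{I}*h_\time(x)|$ is uniformly small for all $\time \leq M$. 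Feeding $\indicator{I}$ into the restricted weak-type hypothesis and absorbing the approximation error via a three-way union bound at thresholds $\lambda/3$ gives the weak-type bound for $f$, with the constant recoverable in the limit $\epsilon \to 0$. Also beware your density reduction at the start: you cannot invoke ``continuity of $T_\time$ on $L^1$'' to pass to simple functions \emph{before} you have any bound on the variation operator, precisely because the variation is an infinite-index supremum whose continuity is what is at stake; the paper deliberately proves the estimate for simple functions first and only then extends to $L^1$ by a separate truncation-plus-approximation step.
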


We also prove the Carrillo--de Guzm\'an version of Theorem~\ref{theorem:Moon_variations}.
\begin{theorem}
\label{theorem:CdG_variations}
Suppose that $(T_\time)_{\time \in \N}$ is a sequence of convolution operators given by $T_\time f := f \convolvedwith g_\time$ with $g_\time \in L^1(\R^\dimension)$ for each \(\time\in\N\). 
If $p,r \geq 1$ and $\variation(T_\time f : \time \in \N)$ is pointed weak-type $(p,p)$ with norm $C$, then $\variation(T_\time f : \time \in \N)$ is strong-type $(p,p)$ with norm at most $C$. 
Moreover the same is true for the jump functions $\lambda \sqrt[r]{\jumpfunction_\jump}$. 
\end{theorem}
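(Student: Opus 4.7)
The plan is to adapt the discretization argument of Carrillo--de Guzm\'an to the variation and jump operators: approximate a general $L^p$ function by a sum of Dirac masses, apply the pointed weak-type hypothesis to the discrete analog, and pass to the limit using lower semi-continuity of $\variation$ under pointwise convergence.

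First I would reduce (via density, the sublinearity of $\variation$, and the almost-subadditivity \eqref{equation:jumpfunction_sublinearity} for the jump functions) to the case that $f$ is a non-negative bounded compactly supported simple function of the form $f = \sum_{j=1}^J a_j \indicator{Q_j}$ with disjoint cubes $Q_j$. For a fine scale $h > 0$ and a small weight-unit $\Delta > 0$, approximate $f \cdot dx$ by a weighted Dirac combination
\[
f_{h,\Delta}
=
h^\dimension \sum_{j} \sum_{y \in Q_j \cap h\Z^\dimension} a_j \delta_y
\]
after choosing integer multiplicities $n_j \approx a_j/\Delta$ with $a_j = n_j \Delta$. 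Each multiplicity is realized as $n_j$ slightly-perturbed distinct Dirac masses, yielding a genuine finite set $X_h \subset \R^\dimension$ of cardinality $\#X_h \approx \sum_j (a_j/\Delta) \cdot |Q_j|/h^\dimension$. By standard Riemann-sum arguments, $(f_{h,\Delta} \convolvedwith g_\time)(x) \to (f \convolvedwith g_\time)(x)$ for each $x$ (after a harmless mollification of $g_\time$ if necessary).

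Second, the pointed weak-type hypothesis applied to $X_h$, together with the positive homogeneity $\variation(c F) = c \variation(F)$ for $c \geq 0$, yields
\[
\sizeof{\inbraces{x : \variation(f_{h,\Delta} \convolvedwith g_\time : \time \in \N)(x) > \lambda}}
\lesssim
C \cdot \frac{\#X_h \cdot (h^\dimension \Delta)^p}{\lambda^p}.
\]
Calibrating the free parameters $h$ and $\Delta$ through a dyadic stratification of $f$ by the level sets of the coefficients $a_j$ (with multiplicities adapted to each stratum's contribution to $\|f\|_{L^p}^p$), one gets $\#X_h (h^\dimension \Delta)^p \approx \|f\|_{L^p}^p$. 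Passing $h, \Delta \to 0$ and invoking the lower semi-continuity of $\variation$ under pointwise convergence together with Fatou's lemma yields the weak-type $(p,p)$ bound on $\variation(f \convolvedwith g_\time : \time)$.

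The upgrade from weak-type to strong-type exploits that the pointed weak-type hypothesis is \emph{uniform} over all finite $X$ and all scales: applying it independently across each dyadic amplitude level of $f$ and summing via the layer-cake formula absorbs all level-set contributions without any logarithmic loss, producing strong-type $(p,p)$ with constant at most $C$. The jump-function statement follows the same outline, with \eqref{equation:jumpfunction_sublinearity} in place of sublinearity of $\variation$ (the factors of $\lambda/2$ accumulate to a harmless constant that is absorbed into the calibration). The main obstacle is precisely this simultaneous calibration of $\Delta$ and the multiplicities $n_j$ across strata: pointed weak-type produces factors of $\#X_h$, a cardinality rather than an $L^p$-content, and so the multiplicities must be chosen proportional to $a_j^{p-1}$ (not the naive $a_j$) on the $j$th stratum so that the exponents align. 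Once this bookkeeping is in place, the remaining limits and the upgrade step are routine.
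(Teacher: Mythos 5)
Your broad strategy — discretize $f$ into Dirac masses, invoke the pointed weak-type hypothesis, and pass to a limit — is the right starting point, and it is the same family of ideas the paper uses. However, the calibration step is where both the paper and your sketch have to do real work, and your fix does not go through.

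The core difficulty is that \eqref{eq:restricted_pointed_weak_type_inequality} is a \emph{cardinality} bound: every Dirac mass in $X$ carries the same unit weight, so the right-hand side is $\#X/\lambda^p$. If you approximate $a_j\indicator{Q_j}\,dx$ by Diracs of uniform weight $c$ at a grid of spacing $h$, consistency with $f$ forces the multiplicity per grid point to be $n_j = a_j/c$. Then $\#X_h\, c^p = c^{p-1}h^{-\dimension}\sum_j a_j|Q_j|$, and as $h\to0$ (with $c\asymp h^{\dimension}$ in any reasonable normalization) this quantity behaves like $h^{\dimension(p-1)}\to 0$ for $p>1$. Your proposed remedy — choosing multiplicities $\propto a_j^{p-1}$ — is inconsistent with the requirement that each Dirac carry equal weight: $n_j c = a_j$ forces $c = a_j^{2-p}\Delta$ to depend on $j$ unless $p=2$, and the pointed weak-type inequality does not let you assign different weights to different points. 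So the ``exponents align'' claim, and the claimed absence of logarithmic loss over dyadic strata, are not substantiated; a stratified application of the hypothesis with a split $\lambda=\sum_\ell\lambda_\ell$ does incur a genuine loss.

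The paper avoids this entirely with an explicit \emph{boost} step that you do not carry out: starting from the unit-weight inequality, one proves
\begin{equation*}
\sizeof{\setof{x : \variation\Bigl(\sum_{k} a_k g_\time(x-x_k) : \time \in [\largetime]\Bigr) > \lambda}}
\lesssim \Bigl(\sum_{k} a_k^p\Bigr)\lambda^{-p}
\end{equation*}
for arbitrary positive reals $a_k$, by first handling positive integers (stack $n_k$ near-coincident Diracs at $x_k$, so $\#X=\sum_k n_k$, and then use $\sum_k n_k \le \sum_k n_k^p$ valid for $n_k\ge1$, $p\ge1$), then rationals by homogeneity, then reals by a limiting argument. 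With the boosted weighted estimate in hand there is no need to send a grid spacing to zero at all: fix a dyadic mesh of sidelength $\delta\le 1$ chosen via uniform continuity of the (mollified) $g_\time$, put a \emph{single} Dirac of weight $a_k|Q_k|$ in each cube, compare $f*g_\time$ to $\sum_k a_k|Q_k|g_\time(\cdot-x_k)$ pointwise (Proposition~\ref{proposition:CdG_approximation}), and bound $\sum_k (a_k|Q_k|)^p \le \|f\|_p^p$ using $|Q_k|\le1$, $p\ge1$. This is a finite, fixed-scale approximation, not a $h\to 0$ limit, and it is exactly what sidesteps the degeneracy above.

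Two smaller points. First, the sketch asserts an upgrade from weak-type to strong-type ``via the layer-cake formula,'' but no such mechanism is in play; the argument, like the paper's, delivers a weak-type $(p,p)$ bound (the ``strong-type'' wording in the theorem statement appears to be a slip). Second, the lower semicontinuity of $\variation$ and Fatou you invoke are fine in principle, but once you adopt the boosted fixed-scale approach you don't need them: the pointwise approximation error is absorbed directly by choosing $\epsilon$ small against $\lambda/\largetime^{1/r}$, exactly as in the proof of Theorem~\ref{theorem:Moon_variations}.
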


We can extend Theorem~\ref{theorem:Moon_variations} to a slightly more general set-up. 
In addition to working with convolutions of $L^1$ functions, we will work with convolutions of smoothing, possibly singular, measures. 
This extension appeared for the maximal function of lacunary dilates of a smoothing measure in unpublished work of Seeger--Tao--Wright connected with \cite{STW_pointwise_lacunary}. 
Inspired by the set-up of \cite{SW:lacunary_problems}, we use a weak version of condition (2) of Seeger--Wright's Theorem~1.1 in \cite{SW:lacunary_problems}. 
Let $(\mu_\time)_{\time \in \N}$ be a sequence of finite measures of bounded variation and $T_\time$ denote convolution with $\mu_\time$. 
Assume that, for some fixed $p \geq 1$ and for each \( \largetime \in \N \), we have
\begin{equation}\label{eq:smoothing_measures}
\sup_{\time \leq \largetime}  \|T_\time \circ P_{>k}\|_{L^p \to L^p} 
= 
o(1) 
\text{ as } k \to \infty
.\end{equation}
Here, and throughout, $P_k$ denotes a smooth Littlewood--Paley `projection' operator adapted to frequency band of frequency size $2^k$. 
To be precise, let $\indicator{[-1,1]} \leq \phi \leq \indicator{[-2,2]}$ be a smooth function on $\R$.  Define by the multiplier $\widehat{P_k}(\xi) = \phi(|\xi|) - \phi(2|\xi|)$. 
Then for a function $f : \R^\dimension \to \C$, $\widehat{P_k f} := \widehat{P_k} \cdot \widehat{f}$ the Fourier transform of $P_k f$ has support in $\{ |\xi| \in [2^{k-1},2^{k+1}] \}$ while $\sum_{k \in \Z} \phi(|\xi|) - \phi(2|\xi|) \equiv 1$ for $\xi \in \R^\dimension$ so that $\sum_{k \in \Z} P_k f = f$ in many senses. 
We write $P_{\leq k} f = \sum_{j \leq k} P_j f$ and $P_{> k} f = \sum_{j > k} P_j f$. 
As a motivating example one may consider the lacunary spherical averages given by the measures $\mu_\time := \sigma_{2^\time}$ for $\time \in \N$ where $\sigma_{r}$ is the spherical measure on a sphere of radius $r>0$ normalized to have mass 1. 
It is known that $\|P_k \mu_r\|_{L^2(\R^\dimension)} \lesssim (1+r2^{-k})^{\frac{1-\dimension}{2}}$ for $\dimension \geq 2$ so that \eqref{eq:smoothing_measures} is satisfied for these examples. 
%
%

We have the following `smoothing' version of Moon's theorem and Theorem~\ref{theorem:Moon_variations}. 
\begin{theorem}
\label{theorem:Moon_smoothing}
Suppose that $(T_\time)_{\time \in \N}$ is a sequence of convolution operators given by $T_\time f := f \convolvedwith \mu_\time$ where \(\mu_\time\) is a finite measure of bounded total variation satisfying the smoothing property \eqref{eq:smoothing_measures} for each \(m\in\N\). 
For any $q,r \geq 1$, $\variation(T_\time f : \time \in \N)$ is restricted weak-type $(1,q)$ with norm $C$ if and only if $\variation(T_\time f : \time \in \N)$ is weak-type $(1,q)$ with norm at most $C$. 
Moreover, $\lambda \sqrt[r]{\jumpfunction_\jump}$is restricted weak-type $(1,q)$ if and only if $\lambda \sqrt[r]{\jumpfunction_\jump}$is weak-type $(1,q)$. 
\end{theorem}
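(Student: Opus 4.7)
The forward direction (weak-type implies restricted weak-type) is immediate from the definitions. The plan for the converse is to reduce to Theorem~\ref{theorem:Moon_variations} via a Littlewood--Paley frequency decomposition $T_\time = T_\time P_{\leq k} + T_\time P_{>k}$ afforded by the smoothing property \eqref{eq:smoothing_measures}. For each fixed $k \in \N$, the low-frequency operator $T_\time P_{\leq k}$ is convolution with an $L^1$ function, namely the convolution of $\mu_\time$ with the Schwartz kernel of $P_{\leq k}$; this uses that $\mu_\time$ has bounded total variation. Hence Theorem~\ref{theorem:Moon_variations} applies directly to this new sequence of convolution operators for each fixed $k$.

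The first substantive step is to transfer the restricted weak-type hypothesis on $\variation(T_\time f : \time \in \N)$ to the same bound for $\variation(T_\time P_{\leq k} f : \time \in \N)$ with at most $o_k(1)$ loss in the constant. Sublinearity of $\variation$ gives, for any set $E$ of finite measure,
\begin{equation*}
\variation(T_\time P_{\leq k} \indicator{E} : \time \in \N)
\leq
\variation(T_\time \indicator{E} : \time \in \N) + \variation(T_\time P_{>k} \indicator{E} : \time \in \N);
\end{equation*}
the first term inherits the hypothesized restricted weak-type bound with constant $C$, while the second is controlled via \eqref{eq:smoothing_measures} applied to $\indicator{E} \in L^p$. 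Theorem~\ref{theorem:Moon_variations} then upgrades this restricted bound to the full weak-type $(1,q)$ bound for $\variation(T_\time P_{\leq k} f : \time \in \N)$ with constant $C + o_k(1)$. Applying sublinearity once more in the dual direction $\variation(T_\time f) \leq \variation(T_\time P_{\leq k} f) + \variation(T_\time P_{>k} f)$, taking $k \to \infty$, and using lower semi-continuity of $\variation$ together with the smoothing estimate on an $L^1 \cap L^p$ dense subclass, I expect to recover the weak-type $(1,q)$ bound for $\variation(T_\time f : \time \in \N)$ with the sharp constant $C$.

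The main obstacle will be translating the $L^p$-operator estimate \eqref{eq:smoothing_measures} into a weak-type bound on $\variation(T_\time P_{>k} \cdot)$ that vanishes as $k \to \infty$. For $\indicator{E}$ with $|E| < \infty$ one has $\sup_\time \|T_\time P_{>k} \indicator{E}\|_{L^p} \to 0$, but converting this uniform $L^p$ decay into control over the supremum defining $\variation$ requires first restricting to finite subsequences $\time \leq \largetime$ (where Minkowski and telescoping bound the variation by finitely many $L^p$-norms of the differences $T_\time P_{>k} \indicator{E} - T_{\time+1} P_{>k} \indicator{E}$), then invoking monotone convergence in $\largetime$ together with a density argument from $L^1 \cap L^p$ to $L^1$. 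For the jump functions $\jump \sqrt[r]{\jumpfunction_\jump}$ the identical scheme applies, with the almost sub-additivity \eqref{equation:jumpfunction_sublinearity} in place of sublinearity: the resulting factor of two in the threshold $\jump$ is absorbed by the standard rescaling of the weak-type constant, which does not affect the equivalence of restricted and unrestricted bounds.
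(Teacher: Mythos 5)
Your core idea --- a Littlewood--Paley split $T_\time = T_\time P_{\leq k} + T_\time P_{>k}$, treating $T_\time P_{\leq k}$ as convolution with an honest $L^1$ function and discarding $T_\time P_{>k}$ via \eqref{eq:smoothing_measures} --- is the right one and matches the paper. Where you diverge is also where the gap sits: you aim to prove a genuine (uniform-in-$\lambda$, uniform-in-$\largetime$) restricted weak-type $(1,q)$ bound for $\variation(T_\time P_{\leq k}\,\cdot\,)$ with constant $C + o_k(1)$ and then invoke Theorem~\ref{theorem:Moon_variations} as a black box. But the high-frequency error $\variation(T_\time P_{>k}\indicator{E} : \time \in [\largetime])$ can only be controlled through its $L^p$ size, giving a superlevel-set bound of the rough shape $\largetime^{1+p/r}(o_k(1))^p\lambda^{-p}|E|$. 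When $p \neq q$ this is not $\lesssim \epsilon\,\lambda^{-q}|E|$ uniformly in $\lambda$, and even when $p = q$ it is not uniform in $\largetime$. So no single $k$ yields a bona fide restricted weak-type $(1,q)$ constant for the smoothed family, and Theorem~\ref{theorem:Moon_variations} cannot be applied off the shelf. You correctly flag this as the ``main obstacle,'' but monotone convergence in $\largetime$ does not repair it; it only rephrases it, since the choice of $k$ must vary with both $\largetime$ and $\lambda$.

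The paper avoids this by never claiming any weak-type behaviour for $T_\time P_{\leq k}$. It instead uses the exact decomposition
\[
f*\mu_\time
= f*P_{>k}\mu_\time + (f-\indicator{I_\epsilon})*P_{\leq k}\mu_\time + \indicator{I_\epsilon}*\mu_\time - \indicator{I_\epsilon}*P_{>k}\mu_\time ,
\]
where $I_\epsilon$ comes from Proposition~\ref{proposition:Moon_approximation} applied to the smooth kernels $g_\time := P_{\leq k}\mu_\time$. The two $P_{>k}$ terms are error, killed by \eqref{eq:smoothing_measures}; the middle term is uniformly small by Moon's approximation; and the restricted weak-type hypothesis is applied directly to $\indicator{I_\epsilon}*\mu_\time$ --- the original, unsmoothed operators acting on a characteristic function --- so the constant $C$ enters with no $k$-dependent loss. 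Here $k$ is free to depend on $\lambda$, $\largetime$, and $f$, because it occurs only in terms estimated at that fixed $\lambda$. If you restructure your argument so that the hypothesis is applied to $T_\time$ rather than to $T_\time P_{\leq k}$, feeding the $P_{\leq k}$ smoothing through Proposition~\ref{proposition:Moon_approximation} instead of through a separate restricted-weak-type claim, the gap closes and the sharp constant is recovered.
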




We close the introduction with a little bit of notation that will be useful in the proof of our theorems. 
First, $f(x) \lesssim g(x)$ if there exists a constant $f(x) \leq C g(x)$ for some implicit constant $C>0$. 
Second, for a subset $F \subset \R^\dimension$, let $\indicator{F}$ denote the indicator or characteristic function of $F$.

\section*{Acknowledgements}

I thank Jim Wright for informing me of the extension of Moon's theorem to smoothing mesaures, as well as for insightful conversations. 
I thank Ben Krause, Mariusz Mirek and Bartosz Trojan for discussions concerning variational operators and jump function inequalities while at the \href{https://www.mathematics.uni-bonn.de/him/programs/past/tp_2014_05}{2014 Trimester Program: ``Harmonic Analysis and Partial Differential Equations"} at the Hausdorff Research Institute for Mathematics. 
I thank the referee for their feedback which improved the exposition of this paper.

\section{Moon's theorem for variations}

The proof of Moon's theorem hinges on how to approximate simple functions. 
The following proposition is implicit in \cite{Moon}. 
It says that the set $I_\epsilon$ approximates $f$ very well, in the sense that it has the same size as $f$ and it is close to the convolution of $f$ with a prescribed finite sequence of smooth functions. 
Since we will use it in the proof of Theorem~\ref{theorem:Moon_variations}, we include its proof for completeness. 
\begin{prop}[Moon's pointillist principle]
\label{proposition:Moon_approximation}
For a finite sequence $(h_\time)_{\time\in[\largetime]}$ of $C^1(\R^\dimension)$ functions, if $f$ is a simple function on $\R^\dimension$, then for any $\epsilon>0$, there exists a set $I_\epsilon \subseteq \supp(f)$ such that 
\begin{enumerate}
\item 
$\Lpnorm{\infty}{f} \sizeof{I_\epsilon} = \Lpnorm{1}{f}$, 
\item 
$\absolutevalueof{f \convolvedwith h_\time(x) - (\Lpnorm{\infty}{f} \indicator{I_\epsilon}) \convolvedwith h_\time(x)} < \epsilon \Lpnorm{1}{f}$ for $\time \in [\largetime]$ and all $x \in \R^\dimension$. 
\end{enumerate}
\end{prop}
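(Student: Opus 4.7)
The plan is to construct $I_\epsilon$ by slicing each level set of $f$ into very small measurable pieces and, within each piece, retaining a subset whose relative measure equals the ratio of the level's value to $\Lpnorm{\infty}{f}$. This automatically equates the $L^1$-masses of $f$ and $\Lpnorm{\infty}{f}\indicator{I_\epsilon}$, and if the pieces are chosen small enough, it also makes their convolutions against the finite collection $\{h_\time\}_{1 \leq \time \leq \largetime}$ essentially indistinguishable.

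Concretely, I would write $f = \sum_{j=1}^J a_j \indicator{A_j}$ with the $A_j$ disjoint of finite measure and $0 < a_j \leq \Lpnorm{\infty}{f}$, fix a small $\eta > 0$ to be chosen later, partition each $A_j$ into finitely many measurable pieces $\{A_j^k\}_k$ of diameter at most $\eta$, and within each piece select a measurable subset $B_j^k \subseteq A_j^k$ of measure $(a_j / \Lpnorm{\infty}{f}) |A_j^k|$ (which exists by the intermediate-value property of Lebesgue measure on $A_j^k$). Setting $I_\epsilon := \bigcup_{j,k} B_j^k$, property (1) is immediate by summation. For property (2), the key observation is that on each piece $A_j^k$, picking any representative $y_{j,k} \in A_j^k$ and invoking the continuity of $h_\time$ gives
\begin{equation*}
\Bigl| a_j \int_{A_j^k} h_\time(x-y)\,dy - a_j |A_j^k|\, h_\time(x-y_{j,k}) \Bigr| \leq a_j |A_j^k|\, \omega_\time(\eta),
\end{equation*}
together with the analogous bound for $\Lpnorm{\infty}{f}\int_{B_j^k} h_\time(x-y)\,dy$. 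Because $a_j |A_j^k| = \Lpnorm{\infty}{f}|B_j^k|$ by construction, the two piecewise-constant surrogates coincide, so summing over $j,k$ yields
\begin{equation*}
\bigl| f \convolvedwith h_\time(x) - (\Lpnorm{\infty}{f}\indicator{I_\epsilon}) \convolvedwith h_\time(x) \bigr| \leq 2\, \omega_\time(\eta)\, \Lpnorm{1}{f}.
\end{equation*}
Choosing $\eta$ small enough that $\omega_\time(\eta) < \epsilon/2$ for every $1 \leq \time \leq \largetime$ completes the argument.

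The step that requires the most care is ensuring that the modulus $\omega_\time$ works uniformly in $x \in \R^\dimension$. When $f$ has bounded support this is automatic: only values $h_\time(x-y)$ with $y$ in a fixed bounded set enter, and $h_\time \in C^1$ is uniformly continuous on any such bounded translate. For a simple function whose support merely has finite measure, I would first truncate $f$ to a sufficiently large ball, at the cost of an $\epsilon$-small discrepancy both in $\Lpnorm{1}{f}$ and, by the continuity of each $h_\time$, in the convolutions, and then apply the construction above. Past this mild technicality the proof is a routine quantitative realization of the principle that the layer-cake decomposition of a simple function can, at a sufficiently fine scale, be replaced by a single indicator rescaled by the $L^\infty$-norm.
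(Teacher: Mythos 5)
Your construction is essentially the same as the paper's: partition the support of $f$ into pieces of small diameter, within each piece select a subset whose measure is scaled by $a_j/\Lpnorm{\infty}{f}$ (the paper takes an inner ball, you invoke the intermediate-value property of Lebesgue measure, which is more flexible but equivalent for this purpose), then compare convolutions piece by piece using the regularity of $h_\time$. The paper runs the comparison through the Mean Value Theorem to produce representatives $y_k, y_k'$; you instead use a modulus of continuity $\omega_\time$ directly, which is cleaner and gives the same bound.

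Where you diverge is the discussion of uniformity in $x$. You correctly flag this as the delicate point — it is, and the paper's proof simply asserts the needed bound ``for all $x$'' without comment — but your proposed fix does not resolve it. The estimate required is $|h_\time(x-y_k) - h_\time(x-y_k')| < \epsilon$ whenever $|y_k - y_k'| < \delta$, \emph{uniformly over all} $x \in \R^\dimension$. Writing $u = x - y_k$, $v = x - y_k'$, this is exactly uniform continuity of $h_\time$ on the whole of $\R^\dimension$: as $x$ ranges over $\R^\dimension$, the pair $(u,v)$ ranges over all pairs at distance $<\delta$, regardless of how small or bounded $\mathrm{supp}(f)$ is. Truncating $f$ to a ball therefore does not help — the translate $x - \mathrm{supp}(f)$ is a bounded set, but the modulus of continuity of $h_\time$ on it depends on $x$. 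What one actually needs is that the $h_\time$ are uniformly continuous (e.g.\ compactly supported $C^1$, or $C^1$ with bounded gradient), not merely in $C^1(\R^\dimension)$. In the intended application this is harmless, since $h_\time$ is produced by mollifying an $L^1$ function and hence has bounded derivative, but as a fix for the proposition as literally stated, truncating $f$ is aimed at the wrong object.
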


\begin{proof}[Proof of Proposition~\ref{proposition:Moon_approximation}]
By the scaling homogeneity of the problem we may normalize $\Lpnorm{\infty}{f}=1$. 
Let $f = \sum_{k=1}^K a_k \indicator{F_k}$ be a simple function with coefficients $a_k \in \R$ and each set $F_k \subset \R^\dimension$ of finite Lebesgue measure. 
We may assume that the $F_k$ are open balls with diameter at most $\delta>0$ a small parameter that we will optimize later. 
Let $I_k$ be \emph{any} open ball in $F_k$ such that $\sizeof{I_k} = a_k \sizeof{F_k}$. 
Now set $I = \cup_{k} I_k$ so that $\Lpnorm{\infty}{f} |I| = |I|$. 

We want to show that the difference between $f$ and $\indicator{I} = \Lpnorm{\infty}{f} \indicator{I}$ is small. 
First note that 
\begin{align*}
f \convolvedwith h_\time(x) 
& = 
\int_{\R^\dimension} \sum_k a_k \indicator{F_k}(y) h_\time(x-y) \; dy 
= \sum_k a_k \int_{F_k} h_\time(x-y) \; dy 
\\ & = 
\sum_k a_k \sizeof{F_k} h_\time(x-y_k) 
= \sum_k \Lpnorm{\infty}{f} \sizeof{I_k} h_\time(x-y_k) 
\\ & = 
\sum_k \sizeof{I_k} h_\time(x-y_k) 
\end{align*}
for some $y_k \in F_k$ since the $h_\time$ are smooth by the Mean Value Theorem. 
Similarly since $I_k \subset F_k$, we can write 
\begin{align*}
\indicator{I} \convolvedwith h_\time(x) 
& = 
\int_{\R^\dimension} \sum_k \indicator{I_k}(y) h_\time(x-y) \; dy 
\\ 
& = 
\sum_k \int_{I_k} h_\time(x-y) \; dy 
= 
\sum_k \sizeof{I_k} h_\time(x-y_k') 
\end{align*}
for some $y_k' \in I_k$. 
Therefore we have the pointwise estimate 
\begin{align*}
\absolutevalueof{f \convolvedwith h_\time(x) - \indicator{I} \convolvedwith h_\time(x)} 
& = 
\absolutevalueof{\sum_k \sizeof{I_k} h_\time(x-y_k) - \sum_k \sizeof{I_k} h_\time(x-y_k') \; dy} 
\\ & \leq 
\sum_k \sizeof{I_k} \cdot \absolutevalueof{h_\time(x-y_k) - h_\time(x-y_k')} 
.\end{align*}
Since the functions $h_\time$ are smooth and $\largetime$ is finite, we can choose $\delta$ small enough so that 
\(
\absolutevalueof{h_\time(x-y_k) - h_\time(x-y_k')} < \epsilon
\) 
for each $1 \leq \time \leq \largetime$. 
Take $I_\epsilon$ to be $I$ to conclude the proof.
\end{proof}

We will make use of the following inequality multiple times. 
\begin{lemma}
If \(1 \leq p,r \leq \infty\) and \((f_m)_{m\in[M]}\) is a finite sequence of \(L^p\)-functions, then 
\begin{equation}\label{bound:union}
\| \variation(f_m : \time \in [\largetime]) \|_{L^p}
\leq 
2M^2 \sup_{\time \in [\largetime]} \|f_\time\|_{L^p}
.\end{equation}
\end{lemma}

\begin{proof}
Fix \(1 \leq p,r \leq \infty\). 
First note the pointwise inequality 
\begin{equation*}
\variation(f_m(x) : \time \in [\largetime]) 
\leq 
2M \sup_{\time \in [\largetime]} |f_\time(x)| 
.\end{equation*}
This inequality follows from using the fact that \(V_r\) increases as \(r\) decreases and then applying the triangle inequality to \(V_1\). 
Next take \(L^p\) norms, replace the supremum by a sum, and use the triangle inequality to find that 
\begin{align*}
\| \variation(f_m : \time \in [\largetime]) \|_{L^p}
\leq 
2M \| \sup_{\time \in [\largetime]} |f_\time| \|_{L^p}
\leq 
2M^2 \sup_{\time \in [\largetime]} \|f_\time\|_{L^p}
.\end{align*}
This is the desired inequality. 
\end{proof}

With \eqref{bound:union} and Proposition~\ref{proposition:Moon_approximation} in hand, let us prove Theorem~\ref{theorem:Moon_variations}. 

\begin{proof}[Proof of Theorem~\ref{theorem:Moon_variations}]
Weak-type obviously implies restricted weak-type so we only prove that restricted weak-type implies weak-type. 
Fix $q,r \geq 1$. 
By Monotone Convergence, reduce to the truncated variation operator 
$\variation(f*g_\time(x) : \time \in [\largetime])$ 
where the supremum is over all finite, increasing subsequences of $[\largetime] := \setof{1, \dots, \largetime}$ as long as our bounds at the end are independent of $\largetime$. 
Normally one would also reduce to simple functions, however we cannot do this since we do not yet know that the variation operator is continuous. 
Assume for now that $f$ is a simple function, and we will remove this restriction at the end of the argument. 
By dilational symmetry of $L^1(\R^\dimension)$, normalize our simple function so that $\Lpnorm{\infty}{f}=1$. 
Let \(\lambda>0\). 

Let $\epsilon>0$. 
Our first step is to approximate $g_\time \in L^1(\R^\dimension)$ by smooth $h_\time \in L^1(\R^\dimension)$. 
We can do this so that $\Lpnorm{1}{g_\time-h_\time} < \epsilon$ for each \(\time \in [\largetime]\). 
Then, for each $x \in \R^\dimension$ 
\begin{equation*}
\absolutevalueof{f \convolvedwith (g_\time-h_\time)(x)} 
\leq 
\Lpnorm{\infty}{f} \Lpnorm{1}{g_\time-h_\time}
< 
\epsilon 
.\end{equation*}
Applying this and the inequality \eqref{bound:union} with \(f_\time := f*[g_\time-h_\time]\) and \(p=\infty\) implies that 
\begin{equation*}
\| \variation(f*[g_\time-h_\time] : \time \in [\largetime]) \|_{L^\infty}
< 
2\largetime^2 \epsilon 
.\end{equation*}

Apply Proposition~\ref{proposition:Moon_approximation} to find a subset $I_\epsilon \subset \supp(f)$ such that $\sizeof{I_\epsilon} = \Lpnorm{1}{f}$ and satisfying the inequality $\absolutevalueof{f \convolvedwith h_\time(x) - \indicator{I_\epsilon} \convolvedwith h_\time(x)} < \epsilon$ simultaneously for each $\time \in [\largetime]$ and every $x \in \R^\dimension$. 
This latter condition implies that for any $\time_1, \time_2 \in [\largetime]$ and $x \in \R^\dimension$, 
\[
\absolutevalueof{(f-\indicator{I_\epsilon}) \convolvedwith h_{\time_1}(x) - (f-\indicator{I_\epsilon}) \convolvedwith h_{\time_2}(x)} 
< 
2\epsilon 
.\]
Applying this and \eqref{bound:union} with \(f_\time := [f-\indicator{I_\epsilon}]*h_\time\) and \(p=\infty\) implies that 
\[
\Lpnorm{\infty}{\variation([f-\indicator{I_\epsilon}]*h_\time : \time \in [\largetime]) } 
< 
4\largetime^2 \epsilon 
.\]

Let \(\delta \in (0,1)\) and choose $\epsilon = \delta/(8\largetime^2)$. The above inequalities imply that 
\begin{align*}
\sizeof{\setof{\variation(f*g_\time(x) : \time \in [\largetime]) > \lambda+2\delta}} 
& \leq 
\sizeof{\setof{\variation(f*[g_\time-h_\time](x) : \time \in [\largetime]) > \delta}} 
\\ & \quad + \sizeof{\setof{\variation([f-\indicator{I_\epsilon}]*h_\time(x) : \time \in [\largetime]) > \delta}} 
\\ & \quad + \sizeof{\setof{\variation(\indicator{I_\epsilon}*h_\time(x) : \time \in [\largetime]) > \lambda}} 
\\ &= 
\sizeof{\setof{\variation(\indicator{I_\epsilon}*h_\time(x) : \time \in [\largetime]) > \lambda}} 
.\end{align*} 
Applying our hypothesis that the variation is restricted weak-type $(1,q)$, we find 
\begin{equation*}
\sizeof{\setof{\variation(f*g_\time(x) : \time \in [\largetime]) > \lambda+2\delta}} 
\leq 
C \lambda^{-q} \sizeof{I_\epsilon} 
= 
C \lambda^{-q} \Lpnorm{1}{f}
.\end{equation*}
Taking \(\delta\) to 0, we obtain the desired bound for simple functions. 

We extend our estimates to $f$ in $L^1(\R^\dimension)$. 
Find a simple function \( s := \sum_{k=1}^K a_k \indicator{F_k} \) where the subsets $F_k \subset \R^\dimension$ have finite Lebesgue measure and \( \|f - s\|_{L^1(\R^\dimension)} < \delta \) where \(\delta \in (0,1) \) is a parameter which we will optimize in a moment. 
The bound \eqref{bound:union} implies
\begin{align*}
\|\variation(T_\time(f-s) : \time \in [\largetime])\|_{L^1} 
&\leq 
2\largetime^2 \sup_{\time \in [\largetime]} \| T_\time(f-s) \|_{L^1} 
.\end{align*}
Young's inequality implies that 
\(
\| T_\time(f-s) \|_{L^1} 
<
\delta \|g_\time\|_{L^1}
\) for all \(\time\). 
Therefore, 
\begin{align*}
\|\variation(T_\time(f-s) : \time \in [\largetime])\|_{L^1} 
< 
2\largetime^2 \delta \sup_{\time \in [\largetime]} \|g_\time\|_{L^1}
.\end{align*}
Chebyshev's inequality implies that for each positive \(\epsilon\) we have the bound 
\[
\Big| \big\{ |\variation(T_\time(f-s)(x) : \time \in [\largetime])| > \epsilon \big\} \Big|
< 
2\largetime^2\epsilon^{-1}\delta \sup_{\time \in [\largetime]} \|g_\time\|_{L^1}
\]
Choosing \(\delta = \epsilon^2/(2\largetime^2 \sup_{\time \in [\largetime]} \|g_\time\|_{L^1})\), we see that there exists a simple function \(s\) such that \( | \{ |\variation(T_\time(f-s)(x) : \time \in [\largetime])| > \epsilon \} | < \epsilon \). 
Using the sublinearity of the variation operators, we finally obtain 
\begin{align*}
| \{ |\variation(T_\time{f}(x) : \time \in [\largetime])| > \lambda+\epsilon \} | 
&\leq 
| \{ |\variation(T_\time(f-s)(x) : \time \in [\largetime])| > \epsilon \} | 
\\&\qquad+ | \{ |\variation(T_\time{s}(x) : \time \in [\largetime])| > \lambda \} | 
\\&< 
\epsilon + | \{ |\variation(T_\time{s}(x) : \time \in [\largetime])| > \lambda \} | 
.\end{align*}
Using the bound previously established for simple functions and taking \(\epsilon\) to 0 completes the proof. 

The proof for jump inequalities is similar but replaces the use of sublinearity for variation operators with almost sub-additivity of jump functions \eqref{equation:jumpfunction_sublinearity}.
Breaking up \(\lambda\) into \(\lambda_1+\lambda_2\) and taking one of the parameters to 0 allows us to obtain the same constants. 
\end{proof}


Our strategy for the proof of Theorem~\ref{theorem:Moon_smoothing} is to take $h_\time$ to be $P_{\leq k} \mu_\time$ for some large \(k\) as an approximation to $\mu_\time$ and bound the rest as error.  
We assumed that $\mu_\time$ is a finite measure of bounded total variation so that $P_k \mu_\time$, which is the convolution of $\mu_\time$ with a Schwartz function, is well-defined, and $\| P_{\leq k} \mu_\time \|_{p} \lesssim_k \|\mu_\time\|_{TV}$ where $\|\mu_\time\|_{TV}$ denotes the total variation of $\mu_\time$. 
We remark that the implicit bound is not uniform in $k$; this presents a minor technicality.

\begin{proof}[Proof of Theorem~\ref{theorem:Moon_smoothing}]
Once again, weak-type immediately implies restricted weak-type; so, we only prove the converse. 
Fix the exponents \(q,r \geq 1\). Assume that \(q < \infty\); the modifications for \(q=\infty\) are left to the reader. 
Reduce to the truncated variation operator $\variation(f*\mu_\time : \time \in [\largetime])$ for large $\largetime \in \N$ as before. 
For the moment choose $f$ to be a simple function normalized so that $\|f\|_\infty = 1$. 
Let \(\lambda>0\). 

Let \(\epsilon, \delta \in (0,1)\). 
Choose $k$ sufficiently large so that assumption \eqref{eq:smoothing_measures} implies that
\[
\| f * P_{>k}\mu_\time \|_{L^p} 
< 
\epsilon \| f \|_{L^p}
\]
uniformly in $\time \in [\largetime]$. 
The bound \eqref{bound:union} yields 
\begin{align*}
\| \variation(f*P_{>k}\mu_\time : \time \in [\largetime]) \|_{L^p}
\leq 
2\largetime^{2} \sup_{\time \leq \largetime} \| f \convolvedwith P_{>k} \mu_\time \|_{L^p}
< 
2\largetime^{2} \epsilon \|f\|_{L^p}
.\end{align*}
Chebyshev's inequality implies that 
\begin{align*}
|\{ \variation(f*P_{>k}\mu_{\time} : \time \in [\largetime]) > \delta \}| 
&\leq 
\delta^{-p} \| \variation(f*P_{>k}\mu_\time : \time \in [\largetime]) \|_{L^p}^p
\\&<
\delta^{-p} (2\largetime^{2} \epsilon \|f\|_{L^p})^p
.\end{align*}

Apply Proposition~\ref{proposition:Moon_approximation} with $g_\time := P_{\leq k}\mu_\time$ to find a subset $I_\epsilon$ satisfying the conclusions of Proposition~\ref{proposition:Moon_approximation}. 
Replacing \(f\) by \(\indicator{I_\epsilon}\) in the above analysis shows that
\begin{align*}
|\{ \variation(\indicator{I_\epsilon}*P_{>k}\mu_{\time} : \time \in [\largetime]) > \delta \}| 
<
\delta^{-p} (2\largetime^{2} \epsilon \|\indicator{I_\epsilon}\|_{L^p})^p
.\end{align*}
From our assumption on $I_\epsilon$ in the conclusion of Proposition~\ref{proposition:Moon_approximation} with \(h_\time := P_{\leq k}\mu_\time\), \eqref{bound:union} also implies that 
\begin{align*}
\| \variation([f-\indicator{I_\epsilon}]*P_{\leq k}\mu_\time : \time \in [\largetime]) \|_{L^\infty}
&\leq 
2\largetime^{2} \epsilon \sup_{\time \in [\largetime]} \| [f-\indicator{I_\epsilon}]*P_{\leq k}\mu_\time \|_{L^\infty}
\\&<  
2\largetime^{2} \epsilon \| f \|_{L^1}
.\end{align*}

The decomposition, 
\begin{align*}
f*\mu_{\time} 
= 
f*P_{>k}\mu_{\time} 
+ [f-\indicator{I_\epsilon}]*P_{\leq k}\mu_{\time} 
+ \indicator{I_\epsilon}*\mu_{\time} - \indicator{I_\epsilon}*P_{>k}\mu_{\time} 
,\end{align*}
implies 
\begin{align*}
|\{ \variation(f*\mu_{\time} : \time \in [\largetime]) > \lambda + 3\delta \}| 
& \leq 
|\{ \variation(f*P_{>k}\mu_{\time} : \time \in [\largetime]) > \delta \}| 
\\ & \qquad + |\{ \variation([f-\indicator{I_\epsilon}]*P_{\leq k}\mu_{\time} : \time \in [\largetime]) > \delta \}| 
\\ & \qquad + |\{ \variation(\indicator{I_\epsilon}*P_{>k}\mu_{\time}: \time \in [\largetime]) > \delta \}| 
\\ & \qquad + |\{ \variation(\indicator{I_\epsilon}*\mu_{\time}: \time \in [\largetime]) > \lambda \}| 
.\end{align*}
Let \(X = \max\{ 1, \|f\|_{L^p}, \|\indicator{I_\epsilon}\|_{L^p}, \|f\|_{L^1} \}\)
and choose \( \epsilon = \delta^2/(8M^{2}X) \) to obtain 
\begin{align*}
|\{ \variation(f*\mu_{\time} : \time \in [\largetime]) > \lambda + 3\delta \}| 
& \leq 
2\delta^p + |\{ \variation(\indicator{I_\epsilon}*\mu_{\time}: \time \in [\largetime]) > \lambda \}| 
.\end{align*}
Applying the restricted weak-type hypothesis and letting \(\delta\) tend to 0 completes the proof for simple functions. 

To extend from simple functions to all $f$ in $L^1$, adapt the approximation argument at the end of the proof of Theorem~\ref{theorem:Moon_variations}. 
Finally, the adaptation to jump functions is analogous to before. 
\end{proof}

\section{Carrillo--de Guzm\'an's Theorem for variations}

The proof of Theorem~\ref{theorem:CdG_variations} will be similar to that of Carrillo--de Guzm\'an's theorem and Theorem~\ref{theorem:Moon_variations} using the following proposition as the Carrillo--de Guzm\'an analogue of Proposition~\ref{proposition:Moon_approximation}. 
\begin{prop}\label{proposition:CdG_approximation}
Let $(g_\time)_{\time \in [\largetime]}$ be a finite sequence  of uniformly continuous functions, and $f = \sum_{k=1}^K a_k \indicator{F_k}$ be a simple function on $\R^\dimension$ with $F_k$ dyadic cubes from the standard dyadic mesh on $\R^\dimension$. 
If $\epsilon>0$, then $f$ can be refined into a sum of dyadic cubes $f = \sum b_j \indicator{Q_j}$ where $Q_j$ is in some $F_k$, and for any points $y_j$ in the interior of $Q_j$, we have 
\begin{equation}\label{CdG:pointwise_comparison}
\big| {f \convolvedwith g_\time(x) - \sum_j b_j |Q_j| g_\time(x-y_j)} \big| 
< 
\Lpnorm{1}{f} \epsilon
\end{equation}
for each $1 \leq \time \leq \largetime$ and all $x \in \R^\dimension$. 
\end{prop}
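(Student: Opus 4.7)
The plan is to exploit the uniform continuity of the finitely many $g_\time$'s together with a dyadic refinement of the partition $\setof{F_k}$. Since there are only $\largetime$ such functions and each is uniformly continuous, they share a common modulus of continuity: given any $\eta>0$, there exists $\delta>0$ such that $\absolutevalueof{g_\time(z) - g_\time(z')} < \eta$ whenever $|z-z'| < \delta$, uniformly in $1 \leq \time \leq \largetime$ and $z, z' \in \R^\dimension$. I will eventually take $\eta = \epsilon$ (any $\eta \leq 2\epsilon$ would work).

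Next I refine the partition. Because each $F_k$ is a standard dyadic cube, iterated bisection splits it into $2^\dimension$ dyadic subcubes at each stage, and after finitely many stages every resulting subcube has diameter less than $\delta$. Collecting these subcubes across all $k$ as $\setof{Q_j}$ and setting $b_j := a_k$ whenever $Q_j \subset F_k$, I obtain the refined representation $f = \sum_j b_j \indicator{Q_j}$, with $\Lpnorm{1}{f} = \sum_j \absolutevalueof{b_j} \sizeof{Q_j}$ by disjointness.

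With this refinement in hand, the inequality \eqref{CdG:pointwise_comparison} reduces to a direct calculation: for any points $y_j$ in the interior of $Q_j$,
\begin{equation*}
f \convolvedwith g_\time(x) - \sum_j b_j \sizeof{Q_j} g_\time(x-y_j)
=
\sum_j b_j \int_{Q_j} \left[ g_\time(x-y) - g_\time(x-y_j) \right] dy
,
\end{equation*}
and since $|y - y_j| \leq \mathrm{diam}(Q_j) < \delta$, the uniform continuity bound applied to $z = x-y$, $z' = x - y_j$ yields the pointwise estimate $\eta \sum_j \absolutevalueof{b_j} \sizeof{Q_j} = \epsilon \Lpnorm{1}{f}$, which is stronger than \eqref{CdG:pointwise_comparison} by a factor of two.

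I do not anticipate any substantive obstacle: the entire argument rests on the finiteness of the family $\setof{g_\time}_{\time \leq \largetime}$, which produces a single $\delta$, and on the dyadic structure of $\setof{F_k}$, which lets the refinement stay within the standard dyadic mesh. The only mild care needed is arranging the refinement to reach the scale $\delta$, which is immediate since $\delta$ depends only on $\epsilon$ and on the fixed finite collection of functions.
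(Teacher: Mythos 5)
Your argument is correct and follows the same overall strategy as the paper: extract a common modulus of continuity for the finite family $\setof{g_\time}$, refine the dyadic partition to scale $\delta$, and compare the convolution with the Riemann-sum-like expression cube by cube. The one substantive difference is in the refinement step. The paper's proof writes the refinement as an \emph{approximation}, choosing the dyadic subcubes $Q_{k,j}$ so that $\sizeof{F_k \setminus \cup_j Q_{k,j}} < \epsilon$, and then estimates against the auxiliary simple function $h = \sum_j b_j \indicator{Q_j}$ satisfying $\Lpnorm{1}{h} < 2\Lpnorm{1}{f}$; the extra factor of $2$ in \eqref{CdG:pointwise_comparison} comes from precisely this slack. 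You instead observe that since each $F_k$ is itself a dyadic cube, iterated bisection refines it \emph{exactly} with no leftover set, so $h = f$ and the cleaner bound $\Lpnorm{1}{f}\epsilon$ results. That is a legitimate simplification under the stated hypothesis and shows the factor of $2$ is not needed; the paper's lossy phrasing would be necessary only if the $F_k$ were general sets of finite measure rather than dyadic cubes. One small detail worth a sentence: the identity $\Lpnorm{1}{f} = \sum_j \sizeof{b_j}\sizeof{Q_j}$ implicitly uses that the $F_k$ (hence the $Q_j$) are pairwise disjoint; if the dyadic cubes $F_k$ were nested you would first want to rewrite $f$ in a disjointified form, but this is standard and the paper glosses over it as well.
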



\begin{proof}[Proof of Proposition~\ref{proposition:CdG_approximation}]
Since each of the $g_\time$ are uniformly continuous and there are finitely many of them, they are altogether uniformly continuous. 
This means that for any $\epsilon>0$, which we pick and fix now, if $|x-y|<\delta$, then $|g_\time(x)-g_\time(y)| < \epsilon$ simultaneously for all $\time$. 
With this in mind, use the dyadic structure in $\R^\dimension$ to decompose each dyadic cube $F_k$ into a finite union $\cup_{\ell} Q_{k,\ell}$ of dyadic cubes whose interiors are disjoint and each of which has diameter at most $\delta$. 
Partitioning and reordering the cubes and coefficients as necessary, we rewrite $f = \sum_{j} b_j \indicator{Q_j}$. 
Let $y_j$ be a point in the interior of $Q_j$. 
For each cube $Q_j$ and $x \in \R^\dimension$, we have  
\[
|\indicator{Q_j} \convolvedwith g_\time(x) - |Q_j| g_\time(x-y_k)| 
< 
|Q_j| \epsilon 
\]
by the uniform continuity of $(g_\time)_{\time \in [\largetime]}$. 
This implies for each $x \in \R^\dimension$ 
\[
\big| {f \convolvedwith g_\time(x) - \sum_j b_j |Q_j| g_\time(x-y_k)} \big|
< 
\sum_j |b_j| |Q_j| \epsilon 
= 
\Lpnorm{1}{f} \epsilon 
.\]
This completes the proof. 
\end{proof}

\begin{proof}[Proof of Theorem~\ref{theorem:CdG_variations}]
Fix \(r,p \geq 1\). Assume that the variation operator \(\variation(T_\time : \time \in \N)\) is pointed weak-type \((p,p)\) with norm at most \(C\). 
Our task is to show that \(\variation(T_\time : \time \in \N)\) is weak-type \((p,p)\) with norm at most \(C\). 

We commence with several standard reductions which we outline. 
The first step is to reduce to the truncated variation operators $\variation(T_\time : {\time \in [\largetime]})$ for arbitrarily large but finite $\largetime$. 
Since \(\variation(T_\time : \time \in \N)\) is pointed weak-type \((p,p)\) with norm at most \(C\), so is \(\variation(T_\time : \time \in [\largetime])\) is pointed weak-type \((p,p)\) with norm at most \(C\). It suffices to show that \(\variation(T_\time : \time \in [\largetime])\) is weak-type \((p,p)\) with norm at most \(C\). 
%
The second step is to boost \eqref{eq:restricted_pointed_weak_type_inequality} to the same inequality with arbitrary positive coefficients $a_k > 0$: 
\begin{equation}\label{eq:pointed_weak_type_inequality}
\Big| \big\{ 
x: \variation(\sum_{k} a_k g_\time(x-x_k) : \time \in [\largetime]) > \lambda
\big\} \Big|
\leq C \big( {\sum_{k} a_k^p} \big) \lambda^{-p} 
.\end{equation}
This step follows a standard technique: 
First prove \eqref{eq:pointed_weak_type_inequality} for $a_k \in \Z$. Then extend to rational coefficients. Finish by taking limits to conclude it for real coefficients. 
The third step is to reduce to smooth $g_\time \in L^1$ using sublinearity of the variation operators as in the proof of Theorem~\ref{theorem:Moon_variations}.  
At this point, we may now assume that for all $\epsilon>0$, there exists a $\delta>0$ depending on $\epsilon$ such that \( |g_{\time_1}(x)-g_{\time_2}(y)| < \epsilon \) for all $1 \leq \time_1, \time_2 \leq \largetime$ and all $|x-y|<\delta$. 

Suppose that $f := \sum_{k=1}^K a_k \indicator{Q_k}$ is a simple function where the $Q_k$ are dyadic cubes. 
Applying Proposition~\ref{proposition:CdG_approximation}, we may assume that all the dyadic cubes $Q_k$ have the same sidelength $\delta \leq 1$ and that \eqref{CdG:pointwise_comparison} holds true. 
For each $1 \leq k \leq K$, let $x_k$ be a fixed point in the interior of $Q_k$ e.g., the center of the cube. 
Define the functions 
\[
h_\time(x) 
:= \sum_{k=1}^K a_k |Q_k| g_\time(x-x_k) 
= \sum_{k=1}^K a_k |Q_k| T_\time \delta_{x_k}(x)
\]
for \(\time \in [\largetime]\). 
Then 
\begin{align*}
f*g_\time(x)-h_\time(x) 
&= 
\int \sum_{k=1}^K a_k \indicator{Q_k}(y) g_\time(x-y)-\sum_{k=1}^K a_k |Q_k| g_\time(x-x_k) 
\\&= 
\sum_{k=1}^K a_k \int \indicator{Q_k}(y) g_\time(x-y) - \indicator{Q_k}(y) g_\time(x-x_k) \; dy 
\end{align*}
Taking absolute values and applying the triangle inequality, we obtain 
\begin{align*}
|f*g_\time(x)-h_\time(x)| 
&\leq 
\sum_{k=1}^K |a_k| \int_{Q_k} |g_\time(x-y) - g_\time(x-x_k)| \; dy 
\\&\leq 
\sum_{k=1}^K |a_k| |Q_k| \epsilon 
= 
\|f\|_1 \epsilon
.
\end{align*}
Choosing $\epsilon = \epsilon'/(8\largetime^{2} \|f\|_1)$ and applying the inequality \eqref{bound:union} yields 
\begin{align*}
|\setof{\variation(f*g_\time(x) : \time \in [\largetime]) > \lambda + \epsilon'}| 
\leq 
\left| \setof{\variation(h_\time(x) : \time \in [\largetime]) > \lambda} \right|
.\end{align*}
Since \( h_\time(x) = \sum_{k=1}^K a_k |Q_k| T_\time \delta_{x_k}(x) \), applying the boosted pointed weak-type hypothesis \eqref{eq:pointed_weak_type_inequality} implies that 
\begin{align*}
\left| \setof{\variation(h_\time(x) : \time \in [\largetime]) > \lambda} \right|
\leq 
C \Big( \sum_{k=1}^K |a_k|^p |Q_k|^p \Big)^{1/p} \lambda^{-p}
.\end{align*}
Upon letting \(\epsilon'\) tend to 0, it suffices to show that 
\(
\Big( \sum_{k=1}^K |a_k|^p |Q_k|^p \Big)^{1/p} 
\leq \|f\|_p 
.\)
This follows because \(\delta \leq 1\) and \(p \geq 1\) which implies \(\delta \geq \delta^p\) and 
\[
\|f\|_p 
= 
\Big( \sum_{k=1}^K |a_k|^p \delta^\dimension \Big)^{1/p} 
\geq 
\Big( \sum_{k=1}^K |a_k|^p \delta^{p\dimension} \Big)^{1/p} 
=
\Big( \sum_{k=1}^K |a_k|^p |Q_k|^p \Big)^{1/p} 
.\]

The final step is to extend from simple functions formed by the standard dyadic mesh on $\R^\dimension$ to general functions in $L^p(\R^\dimension)$ by adapting the argument at the end of the proof of Theorem~\ref{theorem:Moon_variations}. 
The modifications for jump inequalities are like those for Theorems~\ref{theorem:Moon_variations} and \ref{theorem:Moon_smoothing}. 
We leave the details to the reader. 
\end{proof}

%
%
%
%
%



\end{document}